
\documentclass{amsart}

\usepackage{amssymb}
\usepackage{amsthm}
\usepackage{amsmath}
\usepackage{hyperref}

\usepackage[matrix, arrow]{xy}
\xyoption{arrow}
\xyoption{curve}
\textwidth 15cm
  \oddsidemargin 0.7cm
  \evensidemargin 0.7cm
  \textheight 45\baselineskip

\hyphenation{ho-mo-mor-phism}
\hyphenation{ho-mo-mor-phisms}
\hyphenation{endo-mor-phism}
\hyphenation{endo-mor-phisms}

\theoremstyle{plain}\newtheorem{Theorem}{Theorem}[section]
\theoremstyle{plain}
\theoremstyle{plain}
\theoremstyle{plain}\newtheorem{Lemma}[Theorem]{Lemma}
\theoremstyle{plain}\newtheorem{Proposition}[Theorem]{Proposition}
\theoremstyle{plain}
\theoremstyle{definition}
\theoremstyle{definition}
\theoremstyle{definition}
\theoremstyle{definition}
\theoremstyle{definition}
\theoremstyle{definition}\newtheorem{Remark}[Theorem]{Remark}
\theoremstyle{definition}
\theoremstyle{definition}




\def\Aut{\mathrm{Aut}}                    
\def\Br{\mathrm{Br}}

\def\chr{\mathrm{char}}

\def\dim{\mathrm{dim}}

\def\End{\mathrm{End}}

\def\ker{\mathrm{ker}}

\def\Im{\mathrm{Im}}

\def\Irr{\mathrm{Irr}}

\def\uZ{\underline{Z}}

\def\op{\mathrm{op}}

  \def\pr{\mathrm{pr}}

\def\soc{\mathrm{soc}}

\def\Tr{\mathrm{Tr}}



\def\tn{\textnormal}
\usepackage{tikz-cd}
\usepackage{pgf,tikz}
\usetikzlibrary{arrows}
\usetikzlibrary{cd}
\usepackage{graphicx}

\makeindex
\title{A $9$-dimensional algebra which is not a block 
of a finite group} 
\author{Markus Linckelmann and  William Murphy} 
\date{\today}

\begin{document}

\begin{abstract}
We rule out a certain $9$-dimensional algebra over an algebraically
closed field to be the basic algebra of a block of a finite group,
thereby completing the classification of basic algebras of 
dimension at most $12$ of blocks of finite group algebras.
\end{abstract}

\maketitle

\section{Introduction}

Basic algebras of block algebras of finite groups over an
algebraically closed field of dimension at most $12$ have been 
classified in \cite{LinICRA16}, except for one $9$-dimensional
symmetric algebra over an algebraically closed field $k$ of 
characteristic $3$ with two isomorphism classes of simple modules 
for which it is not known whether it actually arises as a basic
algebra of a block of a finite group algebra. 
The purpose of this paper is to show that this algebra does not
arise in this way.
It is shown in \cite[Section 2.9]{LinICRA16} that if 
$A$ is a $9$-dimensional basic algebra over an algebraically closed 
field $k$ of prime characteristic $p$ with two isomorphism classes of 
simple modules such that $A$ is isomorphic to a basic algebra of a 
block $B$ of $kG$ for some finite group $G$, then the algebra $A$ has 
the Cartan matrix 
$$
C = \left(\begin{array}{cc} 5 & 1 \\ 1 & 2 \end{array}\right), \ \
$$
Since the elementary divisors of $C$ are $9$ and $1$, it follows that
$p=3$ and that a defect group $P$ of $B$ is either cyclic (in which 
case $A$ is a Brauer tree algebra) or $P$ is elementary abelian of 
order $9$. We will show that the second case does not arise.

\begin{Theorem} \label{main1}
Let $k$ be an algebraically closed field of prime characteristic $p$.
Let $G$ be a finite group and $B$ a block of $kG$ with Cartan matrix
$C$ as above.
Then $p=3$, the defect groups of $B$ are cyclic of order $9$, and
$B$ is Morita equivalent to the Brauer tree algebra of the tree with
two edges, exceptional multiplicity $4$ and exceptional vertex at the
end of the tree.
\end{Theorem}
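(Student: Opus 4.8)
The plan is to separate the two possibilities for a defect group $P$ of $B$ that are permitted by the elementary divisors of $C$. Since $\det(C)=9$ and the entries of $C$ have no common divisor, the elementary divisors of $C$ are $1$ and $9$; as the largest elementary divisor of the Cartan matrix of a block equals the order of a defect group and this order is a prime power, we get $p=3$ and $|P|=9$. Thus $P$ is either cyclic of order $9$ or elementary abelian of order $9$, and the theorem amounts to ruling out the second case together with identifying $B$ in the first.

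Suppose first that $P$ is cyclic of order $9$. Then $B$ is Morita equivalent to a Brauer tree algebra with $l(B)=2$ edges, so the Brauer tree is the line with two edges, and the exceptional multiplicity is $m=(|P|-1)/l(B)=4$. Up to Morita equivalence there are exactly two such algebras, according to whether the exceptional vertex is an end vertex of the line or the middle vertex (the line having a unique planar structure up to reflection, which induces an isomorphism of the corresponding algebras), and computing the projective indecomposable modules of each shows their Cartan matrices to be $\left(\begin{smallmatrix}m+1&1\\1&2\end{smallmatrix}\right)=\left(\begin{smallmatrix}5&1\\1&2\end{smallmatrix}\right)$ and $\left(\begin{smallmatrix}m+1&m\\m&m+1\end{smallmatrix}\right)=\left(\begin{smallmatrix}5&4\\4&5\end{smallmatrix}\right)$ respectively. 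Since $C$ is the first of these, $B$ is Morita equivalent to the Brauer tree algebra of the line with two edges, exceptional multiplicity $4$, and exceptional vertex at the end of the tree, which is the assertion of the theorem in this case.

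It remains to exclude the possibility that $P$ is elementary abelian of order $9$; this is where the essential difficulty lies. The inertial quotient $E=N_G(P,e_P)/C_G(P)$ embeds into $\Aut(P)\cong\GL_2(3)$ and has order prime to $3$, so up to conjugacy $E$ lies in a semidihedral Sylow $2$-subgroup of $\GL_2(3)$. Here I would invoke the classification up to Morita equivalence of all blocks with defect group $C_3\times C_3$ (equivalently, the known verification of Broué's abelian defect group conjecture together with the relevant Donovan-type finiteness statements in this case): combined with the condition $l(B)=2$ this restricts $B$ to a short explicit list of Morita equivalence classes. In each of them $B$ is Morita equivalent to its Brauer correspondent $b_P$ in $kN_G(P)$, which by Külshammer's structure theorem for blocks with a normal defect group is Morita equivalent to a (possibly twisted) group algebra $k_\alpha[P\rtimes E]$ with $|E|\in\{2,8\}$ (the value $2$ arising with $\alpha$ trivial and the value $8$ only with $E\cong D_8$ and $\alpha$ nontrivial, these being the only data with $l=2$).

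Finally I would obtain the contradiction by comparing the dimensions of basic algebras. The basic algebra of $B$ has dimension $\sum_{i,j}C_{ij}=5+1+1+2=9$. On the other hand, in each surviving case the basic algebra of $b_P$ has dimension $18$: when $\alpha$ is trivial the algebra $k[P\rtimes E]$ with $|E|=2$ has two one-dimensional simple modules and is therefore itself basic, of dimension $|P\rtimes E|=18$; when $E\cong D_8$ and $\alpha$ is nontrivial the algebra $k_\alpha[P\rtimes D_8]$ has exactly two simple modules, both two-dimensional, and is hence isomorphic to $2\times 2$ matrices over a basic algebra of dimension $72/4=18$. Since Morita equivalent algebras have isomorphic, in particular equidimensional, basic algebras, we would get $9=18$, a contradiction. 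Hence $P$ is cyclic of order $9$, which completes the proof. The main obstacle, and the reason the argument is not elementary, is the input from the classification of blocks with defect group $C_3\times C_3$ that is needed to pass from $l(B)=2$ to a Morita equivalence between $B$ and a group algebra of the form $k_\alpha[P\rtimes E]$; the cyclic case and the Cartan matrix and dimension bookkeeping are routine.
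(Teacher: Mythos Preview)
Your handling of the cyclic case is correct and in fact more explicit than the paper's (which simply appeals to the Brauer tree classification once the noncyclic case has been excluded). The difficulty is entirely in ruling out $P\cong C_3\times C_3$, and there your argument has a genuine gap.

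The step ``In each of them $B$ is Morita equivalent to its Brauer correspondent $b_P$'' is doing all the work, and it is not justified. You phrase it as ``Here I would invoke the classification up to Morita equivalence of all blocks with defect group $C_3\times C_3$'', but no such classification is available as a black box independent of the question at hand; indeed, the very point of the paper is to show that the candidate $9$-dimensional algebra does \emph{not} occur among such blocks, thereby contributing to that classification. Asserting that every block with $P\cong C_3\times C_3$ and $\ell(B)=2$ is Morita equivalent to some $k_\alpha[P\rtimes E]$ is tantamount to assuming the conclusion. Brou\'e's abelian defect group conjecture, even where verified, gives only a derived equivalence, and Kiyota's results on such blocks are numerical and do not yield Morita equivalences. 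Your subsequent dimension count ($9$ versus $18$) is fine, but it rests on an unproved hypothesis.

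The paper avoids this by working with a \emph{stable} equivalence rather than a Morita equivalence. First it pins down the unique possible basic algebra $A$ (Theorem~\ref{main2}) and computes its stable centre $\underline{Z}(A)$, showing in particular that $\dim_k J(\underline{Z}(A))^2=2$. Since $\underline{Z}(A)$ is symmetric, a result of Kessar--Linckelmann forces $\underline{Z}(A)\cong (kP)^E$, which together with Kiyota's list of inertial quotients and an orbit count forces $E\cong C_2$ acting by inversion. Puig's theorem then gives a stable equivalence of Morita type between $B$ and $k[P\rtimes C_2]$, and Brou\'e's invariance of stable centres yields $\underline{Z}(A)\cong\underline{Z}(k[P\rtimes C_2])$. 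But an explicit computation shows $\dim_k J(\underline{Z}(k[P\rtimes C_2]))^2=1$, a contradiction. The key difference from your outline is that the paper never claims a Morita equivalence with the local model; it extracts a contradiction from the weaker stable equivalence, which \emph{is} available unconditionally.
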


The proof of Theorem \ref{main1} proceeds in the following stages. We first 
identify  in Theorem \ref{main2} any hypothetical basic algebra $A$ of 
a block with Cartan matrix $C$ as above and a noncyclic defect group. 
It turns out that there is only one candidate algebra, up to 
isomorphism. In Section \ref{AStructure} we give a description of the 
structure of this candidate $A$, and we show in Theorem \ref{main5} that 
$A$ is not isomorphic to a basic algebra of a block.
The proof of Theorem \ref{main2} amounts essentially to
filling in the details in \cite[section 2.9]{LinICRA16}. For the proof
of Theorem \ref{main5} we combine a stable equivalence of Puig
\cite{Puabelien}, a result of Brou\'e in \cite{BroueEq} on the invariance of 
stable centres under stable equivalences of Morita type, results of Kiyota 
\cite{Kiyota} on blocks with an elementary abelian defect group of order 
$9$, and properties of blocks with symmetric stable centres
from \cite{KessarLinckelmann}. A slightly different approach to proving
Theorem \ref{main5} is outlined in the last section, first showing in
Proposition \ref{Further1} a more precise result on the stable equivalence
class of $A$, and then using Rouquier's stable equivalences for blocks 
with elementary abelian defect groups of rank $2$ from 
\cite{Rouqstable}.  

\section{The basic algebra $A$ of a noncyclic block with Cartan matrix $C$} \label{A}

The following result is stated in \cite{Eatonwiki} without proof; 
for the convenience of the reader we give a detailed proof, following 
in part the arguments in \cite[Section 2.9]{LinICRA16}. 

\begin{Theorem} \label{main2}
Let $k$ be an algebraically closed field of prime characteristic $p$.
Let $A$ be a basic algebra with Cartan matrix 
$$
C = \left(\begin{array}{cc} 5 & 1 \\ 1 & 2 \end{array}\right), \ \
$$
such that $A$ is Morita equivalent to a block $B$ of $kG$, for some
finite group $G$, with a noncyclic defect group $P$. Then $p=3$, we
have $P\cong$ $C_3\times C_3$, and $A$ is isomorphic to the algebra
given by the quiver

\vspace{-12mm}
$$
\begin{tikzcd}
i \arrow[ start anchor={[xshift=1.5ex]},end anchor={[xshift=1.5ex]}, in=230,out=130,loop,distance=25mm,swap, "\gamma"] \arrow[in=225,out=135,loop,distance=10mm,swap,"\delta"] \arrow[r, start anchor={[xshift=-0.15ex]}, end anchor={[xshift=0.15ex]}, shift left=1.5, "\alpha"] &  j  \arrow[l,  end anchor={[xshift=-0.5ex]}, start anchor={[xshift=-0.15ex]}, shift left=1.5, "\beta"]
\end{tikzcd}
$$
\vspace{-8mm}

\noindent with relations
$\delta^2=\gamma^3=\alpha\beta$, $\delta\gamma=$
$\gamma\delta=0$, $\delta\alpha=\gamma\alpha=0$, and
$\beta\delta=\beta\gamma=0$. In particular, we have
$$|\Irr(B)| = \dim_k(Z(A))=6\ ,$$
and the decomposition matrix of $B$ is equal to 
$$
D = \left(\begin{array}{cc} 1 & 0 \\ 1 & 0 \\ 1 & 0 \\ 1 & 0\\
1 & 1 \\ 0 & 1  \end{array}\right) \ \ .
$$
\end{Theorem}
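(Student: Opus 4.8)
The plan is to supply the details of the argument sketched in \cite[Section~2.9]{LinICRA16}. First I would determine the defect group. The determinant of $C$ is $9$ and the greatest common divisor of the entries of $C$ is $1$, so the elementary divisors of $C$ are $1$ and $9$; since the elementary divisors of the Cartan matrix of a block are powers of $p$, the largest of which equals the order of a defect group of $B$, this forces $|P|=9$ and $p=3$, and as $P$ is assumed noncyclic, $P\cong C_3\times C_3$.

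Next I would pin down $k(B)$ and the decomposition matrix $D$. Its columns $d_1,d_2$ have non-negative integer entries and satisfy $\langle d_1,d_1\rangle=5$, $\langle d_1,d_2\rangle=1$, $\langle d_2,d_2\rangle=2$, since $D^{\mathrm{tr}}D=C$. A short inspection of such pairs of vectors shows that, up to a permutation of rows, $D$ is either the $3\times 2$ matrix with rows $(2,0),(1,1),(0,1)$ — in which case $k(B)=3$ and $B$ has a decomposition number equal to $2$ — or the $6\times 2$ matrix displayed in the statement, in which case $k(B)=6$. I would rule out the first case using Kiyota's analysis of $3$-blocks with an elementary abelian defect group of order $9$ \cite{Kiyota}, which constrains the possible decomposition matrices of such a block and excludes the pair $(k(B),l(B))=(3,2)$; thus $D$ is as stated and $|\Irr(B)|=k(B)=6$.

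Then I would read off the quiver and relations of $A$. Since $A$ is basic, its two simple modules $S_i,S_j$ are one-dimensional, and since $B$ is a block, $A$ is an indecomposable symmetric $k$-algebra with $\dim_k A=9$, $\dim_k P_i=6$ and $\dim_k P_j=3$. The projective cover $P_j$ has top and socle $S_j$ (by symmetry) and heart $S_i$, hence is uniserial, which forces exactly one arrow $\alpha\colon i\to j$, exactly one arrow $\beta\colon j\to i$, no loop at $j$, and $\alpha\beta$ spanning $\soc(e_jAe_j)$. At the vertex $i$ there are $n$ loops, where $n=\dim_k\Ext^1_A(S_i,S_i)$, and a comparison of the radical layers of $P_i$ gives $1\le n\le 3$. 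The value $n=2$ cannot be singled out from $C$ alone — for instance the Brauer tree algebra of the tree with two edges, exceptional multiplicity $4$ and exceptional vertex at the end has $n=1$ and the same Cartan matrix $C$ — so here I would use the extra block-theoretic input: the stable equivalence of Puig \cite{Puabelien} between $A$ and the basic algebra of the Brauer correspondent $b$ of $B$ in $kN_G(P)$, which is a block with normal defect group $C_3\times C_3$ and hence Morita equivalent to a twisted group algebra of $C_3^2\rtimes E$ for the inertial quotient $E$ of $B$; the constraint $l(B)=2$ (compatibly with \cite{Kiyota}) forces $E\cong C_2$, and transporting the relevant $\Ext$-groups of simple modules along this stable equivalence, together with the decomposition matrix $D$, yields $n=2$. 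The relations $\delta^2=\gamma^3=\alpha\beta$, $\gamma\delta=\delta\gamma=0$, $\delta\alpha=\gamma\alpha=0$ and $\beta\delta=\beta\gamma=0$ then follow from the radical series of $P_i$ and the non-degeneracy of a symmetrizing form on $A$, after rescaling $\gamma,\delta,\alpha,\beta$ suitably.

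Finally, a direct computation with this presentation confirms that $A$ is $9$-dimensional and that $Z(A)$ has basis $\{\,e_i+e_j,\ \gamma,\ \gamma^2,\ \delta,\ \gamma^3,\ \beta\alpha\,\}$, so $\dim_k Z(A)=6=|\Irr(B)|$. I expect the main obstacle to be the uniqueness argument in the preceding paragraph: the Cartan matrix $C$ is realised by several pairwise non-isomorphic $9$-dimensional symmetric algebras, so singling out the correct one — as well as excluding $k(B)=3$ in the second step — genuinely requires the structure theory of $3$-blocks with defect group $C_3\times C_3$ (Kiyota's work and Puig's stable equivalence), not just general properties of symmetric algebras.
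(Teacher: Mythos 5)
Your opening steps (elementary divisors of $C$ force $p=3$ and $|P|=9$, hence $P\cong C_3\times C_3$; $P_j$ uniserial with unique arrows $\alpha\colon i\to j$ and $\beta\colon j\to i$; socle generated by $\alpha\beta$) match the paper. But the crucial step where you determine the number of loops at the vertex $i$ — the claim that $n=2$ follows by ``transporting the relevant $\Ext$-groups of simple modules along this stable equivalence'' — is a genuine gap. A stable equivalence of Morita type between $B$ and its Brauer correspondent does not in general send simple modules to simple modules, so one cannot simply read off $\Ext^1_A(S_i,S_i)$ from the local block; making this precise is exactly the kind of gluing problem that is open in general. The paper avoids this entirely: it fixes a complement $M$ of $U=\Im(\beta)$ modulo $\soc(Ai)$ and analyses the structure of $M/\soc(Ai)$ by cases. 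The case where $M/\soc(Ai)$ is semisimple (which would mean $\ell\ell(A)=3$) is ruled out by Okuyama's theorem on blocks with radical cube zero \cite{Okcube} — an ingredient you do not invoke at all; the uniserial case is ruled out because it yields a Brauer tree algebra and hence a cyclic defect group, and the indecomposable non-uniserial case is excluded by a direct module-theoretic argument. Only then does the paper conclude $M/\soc(Ai)\cong S\oplus(\text{uniserial of length }2)$, i.e.\ two loops at $i$, and derive the relations using the commutativity of the $5$-dimensional symmetric local algebra $iAi$ (a consequence of K\"ulshammer's theorem \cite{Kuesmall}, or the explicit $\soc^2$-argument given there). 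Your route to the relations ``from the radical series of $P_i$ and the non-degeneracy of a symmetrizing form'' skips precisely this last point, which is where $\delta\gamma=0$ comes from.

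A further, smaller divergence: you determine the decomposition matrix early by appealing to Kiyota's analysis to exclude $(k(B),l(B))=(3,2)$. It is not clear that this is a corollary of Kiyota's paper as stated, and the paper does not use it; instead, the paper first establishes the quiver and relations, computes $\dim_k Z(A)=6$ directly (it contains the linearly independent elements $1,\gamma,\gamma^2,\delta$), and then observes that $|\Irr(B)|=\dim_k Z(A)=6$ rules out the $3\times 2$ decomposition matrix for free. This order of argument is cleaner and does not put additional weight on Kiyota's classification, which the paper reserves for the proof of Theorem \ref{main5}.
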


\begin{proof}
As mentioned above, since the elementary divisors of the Cartan
matrix $C$ are $1$ and $9$, it follows that $p=3$ and that $B$ has a 
defect group $P$ of order $9$. Since $P$ is assumed to be noncyclic,
it follows that $P\cong$ $C_3\times C_3$.

Let $\{i,j\}$ be a primitive decomposition of $1$ in $A$. Set
$S=$ $Ai/J(A)i$ and $T=$ $Aj/J(A)j$. It follows from the entries of the
Cartan matrix that we may choose notation such that $Ai$ has composition
length $6$ and $Aj$ has composition length $3$. Since the top and
bottom composition factor of $Aj$ are both isomorphic to $T$, it follows
that $Aj$ is uniserial, with composition factors $T$, $S$, $T$ (from
top to bottom).

We label the two vertices of the quiver of $A$ by $i$ and $j$.
The quiver of $A$ contains a unique arrow from $i$ to $j$ and no
loop at $j$ because $J(A)j/J(A)^2j\cong$ $S$. Thus there is an
$A$-homomorphism
$$\alpha : Ai \to Aj$$
with image $\Im(\alpha)=V$ uniserial of length $2$, with composition
factors $S$, $T$. Since $Aj$ is uniserial of length $3$, it follows that 
$V=$ $J(A)j$ is the unique submodule of length $2$ in $Aj$.

The symmetry of $A$ implies that the quiver of $A$ contains a path from 
$j$ to $i$. This forces that the quiver of $A$ has an arrow from $j$ to 
$i$. Since the Cartan matrix of $A$ implies that $Ai$ has exactly one 
composition factor $T$, it follows that the quiver of $A$ contains 
exactly one arrow from $j$ to $i$. This arrow corresponds to an 
$A$-homomorphism
$$\beta : Aj\to Ai$$
which is not injective as $Aj$ is an injective module. Thus 
$U=$ $\Im(\beta)$ is a submodule of $Ai$  of length at most $2$. 
The length of $U$ cannot be $1$, because the top composition factor
of $U$ is $T$, but the unique simple submodule of $Ai$ is isomorphic
to $S$. Thus $U$ is a uniserial submodule of length $2$ of $Ai$,
with composition factors $T$, $S$. It follows that $\beta\circ\alpha$
is an endomorphism of $Ai$ with image $\soc(Ai)$.

Since $U=$ $\Im(\beta)$ and $\beta$ corresponds to an arrow in the quiver
of $A$, it follows that $U$ is not contained in 
$J(A)^2i$. Thus the simple submodule $U/\soc(Ai)$ of $J(A)i/\soc(Ai)$ is
not contained in the radical of $J(A)i/\soc(Ai)$, and therefore must be
a direct summand. Let $M$ be a submodule of $Ai$ such that $M/\soc(Ai)$
is a complement of $U/\soc(Ai)$ in $J(A)i/\soc(Ai)$. Then
$$J(A)i = U + M$$
$$\soc(Ai) = U\cap M$$
and, by the Cartan matrix, $M$ has composition length $4$, and
all composition factors of $M$ are isomorphic to $S$, and $\soc(M)=$
$\soc(Ai)$. 
Equivalently, $M/\soc(Ai)$ has length $3$, with all composition factors
isomorphic to $S$. We rule out some cases.

\smallskip
\noindent {\bf (1)}
$M/\soc(Ai)$ cannot be semisimple. Indeed, if it were semisimple, then
$J(A)i/\soc(Ai)=$ $U/\soc(Ai)\oplus M/\soc(Ai)$ would be semisimple.
This would imply that $J(A)^3i=\{0\}$. Since also $J(A)^3j=\{0\}$,
it would follow that $\ell\ell(A)=3$. But a result of Okuyama in 
\cite{Okcube} rules this out. Thus $M/\soc(Ai)$ is not semisimple. 

\smallskip
\noindent {\bf (2)}
$M/\soc(Ai)$ cannot be uniserial. Indeed, if it were, then the quiver of
$A$ would have a unique loop at $i$, corresponding to an endomorphism
$\gamma$ of $Ai$ mapping $Ai$ onto $M$ (with kernel necessarily equal to 
$U$ because $M$ has no composition factor isomorphic to $T$). 
Then $\gamma^5=0$ and $\gamma^4$ has image $\soc(Ai)\cong$ $S$. 

By construction, $\alpha$ maps $U$ to $\soc(Aj$ and $\beta$ maps $V$ to
$\soc(Ai)$. Thus $\beta\circ\alpha$ sends $Ai$ onto $\soc(Ai)$, and
$\alpha\circ\beta$ sends $Aj$ onto $\soc(Aj)$. Thus $\gamma^4$ and
$\beta\circ\alpha$ differ at most by nonzero  scalar. We may choose
$\alpha$ such that $\gamma^4=\beta\circ\alpha$. 

The homomorphism $\alpha$ sends $M$ to zero, because $Aj$ contains no
simple submodule isomorphic to $S$. Thus $\alpha\circ\gamma=0$. 
Also, since $U$ is the kernel of $\gamma$, we have $\gamma\circ\beta=0$.

It follows from these relations that $A$ is a Brauer tree algebra, of a
tree with two edges, exceptional multiplicity $4$, and exceptional vertex
at an end of the Brauer tree. This would force $P$ to be cyclic, 
contradicting the current assumption that $P\cong$ $C_3\times C_3$. 

\smallskip
\noindent {\bf (3)}
$M/\soc(Ai)$ cannot be indecomposable. Indeed, if it were, then it would
have Loewy length $2$ because it has composition length $3$, but is 
neither of length $1$ (because it is not semisimple) nor of length $3$ 
(because it is not uniserial). But then either its socle or its top is 
simple, and therefore it would have to be either a quotient of $Ai$, or a 
submodule of $Ai$. We rule out both cases. 

Suppose first that $M/\soc(Ai)$ is a quotient of $Ai$. Note that then
$M$ itself has a simple top, isomorphic to $S$, hence is a quotient of 
$Ai$ because $Ai$ is projective. Comparing composition lengths yields
$M\cong$ $Ai/U$. But also $U+M=J(A)i$, so the image of $M$ in 
$Ai/U$ is the unique maximal submodule $J(A)i/U$ of $Ai/U\cong $ $M$.
Thus $J(A)M$ is the unique maximal submodule of $M$, and that maximal
submodule is isomorphic to a quotient of $M$, hence has itself a unique
maximal submodule. This however would imply that $M/\soc(Ai)$ is 
uniserial of length $3$, which was ruled out earlier.  

Suppose finally that $M/\soc(Ai)$ is a submodule of $Ai$. Then it must be
a submodule of $M$, because it does not have a composition factor $T$. 
Moreover, $M$ and the image of $M/\soc(Ai)$ in $M$ both have the same
simple socle $\soc(Ai)$. 
Thus $M/\soc(Ai)$ divided by its socle (which is simple) is a submodule 
of $M/\soc(Ai)$, which has a simple socle. Thus the first and second 
socle series quotients are both simple, again forcing $M/\soc(Ai)$ to be
uniserial, which is not possible.  

\smallskip
\noindent {\bf (4)}
Combining the above, it follows that $M/\soc(Ai)$ is a direct sum of 
$S$ and a uniserial module of length $2$ with both composition factors
$S$. That is, we have
$$M = M_1 + M_2$$
for some submodules $M_i$ of $M$ with
$$M_1\cap M_2= \soc(Ai)=\soc(M)$$
$$M_1/\soc(Ai)\cong S$$
and $M_2/\soc(A_i)$ uniserial of length $2$. It follows that
$M_1$ and $M_2$ are uniserial, of lengths $2$ and $3$, respectively.

\smallskip
We choose now $M_2$ as follows. By construction, we have a direct sum
$$J(A)i/\soc(Ai) = U/\soc(Ai) \oplus M_1/\soc(Ai) \oplus M_2/\soc(Ai)$$
Thus we have
$$J(A)i/(U + M_1) \cong (J(A)i/\soc(Ai))/(U/\soc(Ai)\oplus 
M_1/\soc(Ai))\cong M_2/\soc(Ai)\ .$$
This is a uniserial module with two composition factors isomorphic to $S$.
Thus $Ai/(U+M_1)$ is uniserial with three composition factors isomorphic
to $S$, because $Ai/J(A)i\cong$ $S$. Since in particular its socle is
simple, isomorphic to $S$, this module is isomorphic to a submodule of 
$Ai$.
Choose an embedding $A/(U + M_1) \to Ai$ and replace $M_2$ by the image
of this embedding. Then the composition of canonical maps
$$\gamma : Ai \to Ai/(U + M_1) \to Ai$$
is an $A$-endomorphism of $Ai$ with kernel $U+M_1$ and uniserial image
$M_2$ of length three. Note that $M_1$ is uniserial of length two, so
both a quotient and a submodule of $Ai$. Thus there is an endomorphism
$$\delta : Ai \to Ai$$
with image $M_1$. Since $M_1\subseteq$ $\ker(\gamma)$, we have
$$\gamma\circ\delta = 0\ .$$
We show next that we also have 
$$\delta\circ\gamma = 0\ .$$
One way to see this is to observe that this is a calculation in the 
split local $5$-dimensional symmetric algebra $\End_A(Ai)\cong$ 
$(iAi)^\op$, which as a consequence of \cite[B. Theorem]{Kuesmall}, is 
commutative.

There is a (slightly more general) argument that works in this 
case. Since the $A$-module $Ai$, and hence also the image of $\gamma$,  
is generated by $i$, it suffices to show that $\delta(\gamma(i))=0$. 
Now since $\gamma\circ\delta=0$, we have 
$$0=\gamma(\delta(i))= \gamma(\delta(i)i)=\delta(i)\gamma(i)$$
Note that $\delta(i)=\delta(i^2)=i\delta(i)\in$ $iAi$, and similarly,
$\gamma(i)\in$ $iAi$. Since $\Im(\delta)=$ $M_2$ has length $2$,we 
have $\Im(\delta)\subseteq$ $\soc^2(A)$. Thus $\delta(i)\in$
$\soc^2(A)\cap iAi\subseteq$ $\soc^2(iAi)$, and since $iAi$ is symmetric, 
we have $\soc^2(iAi)\subseteq$ $Z(iAi)$. It follows that
$$\delta(i)\gamma(i)=\gamma(i)\delta(i)=\delta(\gamma(i)i)=
\delta(\gamma(i))$$
whence $\delta(\gamma(i))=0$, and so $\delta\circ\gamma=0$ by the 
previous remarks. Thus $M_2\subseteq$  $\ker(\delta)$. Since 
$\Im(\delta)=M_1$ has no composition factor $T$, it follows that 
$U\subseteq$ $\ker(\delta)$. Together we get that $U+M_2\subseteq$ 
$\ker(\delta)$. Comparing composition lengths yields
$$\ker(\delta)=U+M_2\ .$$
This implies that 
$$\ker(\delta)\cap \Im(\delta)=\soc(Ai)$$
$$\ker(\gamma)\cap \Im(\gamma)=\soc(Ai)$$
and hence the endomorphisms $\delta^2$ and $\gamma^3$ both map $Ai$
onto $\soc(Ai)$. Thus they differ by a nonzero scalar. Up to adjusting 
$\delta$, $\beta$, we may therefore assume that
$$\delta^2 = \gamma^3=\beta\circ\alpha$$
Since $\ker(\alpha)$ contains $M_1+M_2$, it follows that 
$$\alpha\circ\delta = \alpha\circ\gamma = 0\ .$$
By taking these relations into account, it follows that $\End_A(A)$
is spanned $k$-linearly by the set
$$\{i, j, \alpha, \beta, \gamma, \gamma^2, \delta, \delta^2,
\alpha\circ\beta\}$$
so this is a basis of $\End_A(A)$. We have identified here $i$, $j$ 
with the canonical projections of $A$ onto $Ai$ and $Aj$. 
Note that $\End_A(A)$ is the algebra opposite to $A$. This accounts
for the reverse order in the relations of the generators in $A$
(denoted abusively by the same letters). This shows that the quiver
with relations of $A$ is as stated. The equation $C=$ $(D^t)D$
implies that the second column of $D$ has exactly two nonzero
entries and that these are equal to $1$. The first row has either 
five entries equal to $1$, which yields $|\Irr(B)|=6$ and the
decomposition matrix $D$ as stated. Or the first row has one entry $2$ 
and one entry $1$. This would lead to a decomposition matrix of the
form 
$$
D = \left(\begin{array}{cc} 2 & 0 \\ 
1 & 1 \\ 0 & 1  \end{array}\right) \ \ .
$$
In particular, this would yield $|\Irr(B)|=3$. But this is not possible, 
since $\dim_k(Z(A))$ is clearly greater than $3$; indeed, $Z(A)$ 
contains the linearly independent elements $1$, $\delta$, $\gamma$, 
$\gamma^2$. This concludes the proof.
\end{proof}

\section{The structure of the algebra $A$}\label{AStructure}

Let $k$ be an algebraically closed field. Throughout this section we denote by 
$A$ the $k$-algebra given in Theorem \ref{main2}. 
We keep the notation of this theorem and identify the
generators $i$, $j$, $\alpha$, $\beta$, $\gamma$, $\delta$ with their
images in $A$.

\begin{Lemma} \label{LemmaA}
\mbox{}
\begin{enumerate}
\item[{\rm (i)}]
The set $\{i, j, \alpha, \beta, \beta\alpha, \gamma, \gamma^2, 
\delta, \delta^2\}$ is a $k$-basis of $A$.
 
\item[{\rm (ii)}]
The set $\{\alpha, \beta, \alpha\beta-\beta\alpha\}$ is a $k$-basis
of $[A, A]$.

\item[{\rm (iii)}]
The set $\{1, \gamma, \gamma^2, \delta, \delta^2, \beta\alpha\}$ 
is a $k$-basis of $Z(A)$. 

\item[{\rm (iv)}] The set $\{\alpha\beta, \beta\alpha \}$ is a $k$-basis of $\soc(A)$.

\end{enumerate}
\end{Lemma}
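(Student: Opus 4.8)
The plan is to establish part~(i) first and then read off (ii)--(iv) from the multiplication table it provides. For~(i) I would enumerate the paths of the quiver and reduce them modulo the relations: the relations $\gamma^3=\delta^2=\alpha\beta$ identify these three elements, $\gamma\delta=\delta\gamma=0$ and $\gamma\alpha=\delta\alpha=\beta\gamma=\beta\delta=0$ hold by definition, and every remaining longer monomial vanishes, since $\delta^3=(\alpha\beta)\delta=\alpha(\beta\delta)=0$, $\gamma^4=\gamma\cdot\delta^2=(\gamma\delta)\delta=0$, $\alpha\beta\alpha=\delta^2\alpha=\delta(\delta\alpha)=0$ and $\beta\alpha\beta=\beta(\alpha\beta)=(\beta\delta)\delta=0$. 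Hence the nine listed elements span $A$; their linear independence (equivalently $\dim_kA=9$) is part of Theorem~\ref{main2}, and can alternatively be checked by writing down the projective indecomposables $Ai$ and $Aj$ explicitly as modules of dimension $6$ and $3$ on which all relations hold. From the spanning set I would record that $iAi$ has $k$-basis $\{i,\gamma,\gamma^2,\delta,\delta^2\}$, $jAj$ has $k$-basis $\{j,\beta\alpha\}$, $iAj=k\alpha$, $jAi=k\beta$, and that the corner algebras $iAi$ and $jAj$ are commutative.

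For~(iv) I would use that $A$ is symmetric and $Ai$, $Aj$ are indecomposable projectives, so $\soc(A)=\soc({}_AA)=\soc(Ai)\oplus\soc(Aj)$ is two-dimensional (by Theorem~\ref{main2} the module $Ai$ has simple socle and $Aj$ is uniserial). Since the radical is generated as a left ideal by the arrows, an element lies in the socle precisely when it is killed on the left by each of $\alpha,\beta,\gamma,\delta$; a direct check with the relations shows $\alpha\beta\,(=\delta^2)$ and $\beta\alpha$ pass this test, and being nonzero and lying in $Ai$ and $Aj$ respectively they form a basis of $\soc(A)$. For~(iii), a central element commutes with $i$ and $j$, hence lies in $iAi\oplus jAj$; since $iAi$ and $jAj$ are commutative, any such element automatically commutes with $i,j,\gamma,\delta$ and therefore with $\gamma^2,\delta^2,\beta\alpha$, so the only remaining conditions are $z\alpha=\alpha z$ and $z\beta=\beta z$. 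Writing $z=a_0i+a_1\gamma+a_2\gamma^2+b_1\delta+b_2\delta^2+c_0j+c_1\beta\alpha$, the relations $\gamma\alpha=\delta\alpha=0$, $\beta\gamma=\beta\delta=0$ and $\alpha\beta\alpha=\beta\alpha\beta=0$ reduce both conditions to the single equation $a_0=c_0$, leaving $Z(A)=k(i+j)\oplus k\gamma\oplus k\gamma^2\oplus k\delta\oplus k\delta^2\oplus k\beta\alpha$ of dimension $6$.

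For~(ii) I would work with the Peirce decomposition $A=iAi\oplus iAj\oplus jAi\oplus jAj$. Since $[i,\alpha]=\alpha$ and $[i,\beta]=-\beta$, both $\alpha$ and $\beta$ lie in $[A,A]$. For the diagonal contributions, the $iAi$-component of a commutator $[x,y]$ is $[x_{ii},y_{ii}]+(x_{ij}y_{ji}-y_{ij}x_{ji})$; the first term vanishes by commutativity of $iAi$, and writing $x_{ij}=a\alpha$, $y_{ij}=c\alpha$, $x_{ji}=b\beta$, $y_{ji}=d\beta$ the second equals $(ad-bc)\alpha\beta$, while symmetrically the $jAj$-component of $[x,y]$ equals $-(ad-bc)\beta\alpha$. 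Hence the diagonal part of every commutator is a scalar multiple of $\alpha\beta-\beta\alpha$, so $[A,A]\subseteq k\alpha+k\beta+k(\alpha\beta-\beta\alpha)$; equality follows from $\alpha\beta-\beta\alpha=[\alpha,\beta]$, and the three elements are linearly independent by~(i) (recalling $\alpha\beta=\delta^2$).

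The main obstacle is part~(i): establishing that the nine spanning elements are linearly independent without circularly invoking the existence of a block with these invariants. I expect the cleanest route to be the explicit construction of $Ai$ and $Aj$ as modules of dimensions $6$ and $3$ (or a direct appeal to the dimension count already carried out in the proof of Theorem~\ref{main2}). A secondary subtlety is that~(ii) produces a $3$-dimensional, not $4$-dimensional, answer: the collapse of $k\alpha\beta\oplus k\beta\alpha$ onto the single line $k(\alpha\beta-\beta\alpha)$ inside $[A,A]$ depends essentially on the commutativity of the corner algebras $iAi$ and $jAj$.
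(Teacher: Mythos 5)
Your proposal is correct and simply carries out in full the direct verification that the paper's one-line proof declares ``immediate from the relations of the quiver''; in particular your observation that commutativity of the corner algebras $iAi$ and $jAj$ is what collapses the diagonal part of $[A,A]$ onto the single line $k(\alpha\beta-\beta\alpha)$ is exactly the point that makes (ii) and (iii) work. The one thing to phrase more carefully is the dimension count in (i): appealing to Theorem~\ref{main2} would be circular, since that theorem presupposes that $A$ arises from a block, so you should take the alternative route you already name and exhibit $Ai$ and $Aj$ as explicit $6$- and $3$-dimensional representations satisfying the relations, which gives linear independence of the nine spanning monomials on the regular module $A=Ai\oplus Aj$.
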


\begin{proof}
This follows immediately from the relations of the quiver of $A$.
\end{proof}

\begin{Lemma} \label{symmLemma}
There is a unique symmetrising form $s : A\to k$ such that 
$$s(\alpha\beta)=s(\beta\alpha)=1$$ 
and such that 
$$s(i)=s(j)=s(\alpha)=s(\beta)=s(\gamma)=s(\gamma^2)=s(\delta)=0$$
The dual basis with respect to the form $s$ of the basis 
$$\{i, j, \alpha, \beta, \beta\alpha, \gamma, \gamma^2, 
\delta, \delta^2\}$$
is, in this order, the basis
$$\{\alpha\beta, \beta\alpha, \beta, \alpha, j, \gamma^2, \gamma,
\delta, i\}$$
\end{Lemma}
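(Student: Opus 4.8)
The plan is to verify Lemma \ref{symmLemma} by a direct computation, exploiting the fact that everything is already pinned down by the quiver and relations recorded in Theorem \ref{main2} and the basis in Lemma \ref{LemmaA}(i). First I would establish existence and uniqueness of the form. A symmetrising form is a linear map $s\colon A\to k$ which is symmetric ($s(xy)=s(yx)$ for all $x,y$) and non-degenerate (its associated bilinear form $(x,y)\mapsto s(xy)$ has trivial radical). By Lemma \ref{LemmaA}(iv), $\soc(A)$ is spanned by $\alpha\beta$ and $\beta\alpha$; since any symmetrising form must restrict to a non-degenerate pairing on each side, its values are forced on $\soc(A)$ up to the choices already made, and I would argue that specifying $s$ on a complement of $\soc(A)$ is then determined by the symmetry condition applied to products landing in $\soc(A)$. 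Concretely, I would simply \emph{define} $s$ on the basis $\{i,j,\alpha,\beta,\beta\alpha,\gamma,\gamma^2,\delta,\delta^2\}$ by $s(\beta\alpha)=1$ and $s$ vanishing on $i,j,\alpha,\beta,\gamma,\gamma^2,\delta$, and on $\delta^2=\gamma^3=\alpha\beta$ (which is the remaining basis vector, call it $z$) by the value forced below; then check the two required properties.

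The heart of the computation is the dual basis claim, and this is also what fixes $s$ on $\delta^2=\alpha\beta$. For each ordered pair of basis vectors $(e,f)$ from $\{i,j,\alpha,\beta,\beta\alpha,\gamma,\gamma^2,\delta,\delta^2\}$ I would compute the product $ef$ in $A$ using the relations $\delta^2=\gamma^3=\alpha\beta$, $\delta\gamma=\gamma\delta=0$, $\delta\alpha=\gamma\alpha=0$, $\beta\delta=\beta\gamma=0$ (and their consequences, e.g.\ $\alpha\delta=\alpha\gamma=0$ on the $Aj$ side, $\beta\alpha\cdot\beta=0$, etc.), express $ef$ in the basis, and read off its coefficient along the socle line on which $s$ is supported. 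The claim is that pairing the $n$-th listed basis vector against the $n$-th element of $\{\alpha\beta,\beta\alpha,\beta,\alpha,j,\gamma^2,\gamma,\delta,i\}$ gives $1$, and all other pairings give $0$. For instance $s(i\cdot\alpha\beta)=s(\alpha\beta)$, so to make this $1$ we are forced to set $s(\alpha\beta)=s(\delta^2)=1$; then $s(j\cdot\beta\alpha)=s(\beta\alpha)=1$ is already arranged; $s(\alpha\cdot\beta)=s(\alpha\beta)=1$; $s(\beta\cdot\alpha)=s(\beta\alpha)=1$; $s(\beta\alpha\cdot j)=s(\beta\alpha)=1$; $s(\gamma\cdot\gamma^2)=s(\gamma^3)=s(\delta^2)=1$; $s(\gamma^2\cdot\gamma)=1$ likewise; $s(\delta\cdot\delta)=s(\delta^2)=1$; and $s(\delta^2\cdot i)=s(\delta^2)=1$. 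The off-diagonal vanishings are each a one-line relation-check: products like $i\cdot\beta\alpha$, $\gamma\cdot\delta$, $\delta\cdot\gamma^2$, $\alpha\cdot i$, $\beta\cdot j$, $\delta^2\cdot\gamma$, etc., are either zero in $A$ or lie in the span of basis vectors on which $s$ vanishes.

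Having the explicit dual basis immediately gives non-degeneracy of $s$ (a linear form admitting a dual basis for some basis is non-degenerate), so it remains only to confirm symmetry $s(xy)=s(yx)$; by bilinearity this reduces to checking it on the nine basis vectors, and in fact only on pairs whose product can reach the socle, which is a short finite list handled alongside the computation above (note $A/\,\mathrm{rad}(A)$ is commutative enough, and the only genuinely non-commuting generators are $\alpha,\beta$, whose commutator $\alpha\beta-\beta\alpha$ lies in $[A,A]$ and is killed by $s$ since $s(\alpha\beta)=s(\beta\alpha)=1$). For uniqueness: any symmetrising form $s'$ satisfying the stated normalisation agrees with $s$ on all nine basis vectors — on the seven where $s$ vanishes by hypothesis, on $\beta\alpha$ by hypothesis, and on $\delta^2=\alpha\beta$ because $s'(\alpha\beta)=s'(i\cdot\alpha\beta)$ must equal $s'$ evaluated against the socle in a way forced by symmetry and non-degeneracy, giving $1$ — so $s'=s$.

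I expect the main obstacle to be purely bookkeeping: correctly enumerating all products $ef$ of basis vectors in the (noncommutative) algebra $A$ without sign or indexing errors, in particular keeping straight which relations hold on the $Ai$-side versus the $Aj$-side and the consequences such as $\gamma^4=0$, $\delta^3=0$, $\alpha\beta\cdot\gamma=0$, $\delta\cdot\beta\alpha=0$. There is no conceptual difficulty — everything follows from Lemma \ref{LemmaA} and the relations — but the verification of the full $9\times 9$ pairing table is where care is needed; I would organise it by first listing all nonzero products and noting that the vast majority of entries are zero for degree (Loewy-length) reasons.
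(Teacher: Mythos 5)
Your proposal is correct and takes essentially the same approach as the paper, which dismisses the lemma with the single line ``Straightforward verification''; you have simply spelled out that verification. One small streamlining: uniqueness is immediate because the stated normalisations already assign a value to every element of the basis $\{i,j,\alpha,\beta,\beta\alpha,\gamma,\gamma^2,\delta,\delta^2\}$ (since $\delta^2=\alpha\beta$ and $s(\alpha\beta)=1$ is part of the hypothesis), and symmetry follows at once from Lemma~\ref{LemmaA}(ii) since $s$ visibly vanishes on the basis $\{\alpha,\beta,\alpha\beta-\beta\alpha\}$ of $[A,A]$, so there is no need for the more roundabout ``forced by non-degeneracy'' reasoning at the end.
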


\begin{proof}
Straightforward verification.
\end{proof}

See \cite[\S 5.B]{BroueEq} or \cite[Definition 2.16.10]{LinBlockI} for 
details regarding the definitions and some properties of the 
{\it projective ideal} $Z^\pr(A)$ in $Z(A)$ and the {\it stable centre} 
$\uZ(A)=$ $Z(A)/Z^\pr(A)$. 

\begin{Lemma} \label{projcenterLemma}
Let $\chr(k)=3$. The projective ideal $Z^\pr(A)$ is one-dimensional, 
with basis $\{\alpha\beta-\beta\alpha\}$, we have an isomorphism of
$k$-algebras
$$\underline{Z}(A) \cong k[x,y]/(x^3-y^2, xy, y^3)$$
induced by the map sending $x$ to $\gamma$ and $y$ to $\delta$, 
and after identifying $x$ and $y$ with their in the quotient, the 
following statements hold:

\begin{enumerate}
\item[{\rm (i)}] 
The set $\{1,x,x^2,y,y^2\}$ is a $k$-basis of $\uZ(A)$, and in 
particular $\dim_k(\uZ(A))=5$.

\item[{\rm (ii)}] 
The set $\{x,x^2,y,y^2\}$ is a $k$-basis of $J(\uZ(A))$.

\item[{\rm (iii)}] 
The set $\{x^2,y^2\}$ is a $k$-basis of $J(\uZ(A))^2$.

\item[{\rm (iv)}] 
The set $\{y^2\}$ is a $k$-basis of $\soc(\uZ(A))$, and 
$J(\uZ(A))^3=\soc(\uZ(A))$.

\item[\rm (v)] The $k$-algebra
$\uZ(A)$ is a symmetric algebra.
\end{enumerate}
\end{Lemma}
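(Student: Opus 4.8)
The plan is to determine $Z^\pr(A)$ explicitly, then to identify $\uZ(A)$ with a quotient of a polynomial ring, and finally to read off (i)--(iv) from a monomial basis, with (v) a consequence of (iv). First I would compute $Z^\pr(A)$ as the image of the transfer map $t\colon A\to Z(A)$, $t(a)=\sum_k u_k a v_k$, where $(u_k)$, $(v_k)$ are the dual bases from Lemma \ref{symmLemma} (this is the description of $Z^\pr(A)$ recalled in the references cited just before the statement). Evaluating $t$ on the basis $\{i,j,\alpha,\beta,\beta\alpha,\gamma,\gamma^2,\delta,\delta^2\}$ of $A$ from Lemma \ref{LemmaA}(i), and using $1=i+j$ together with $\alpha\in iAj$, $\beta\in jAi$, $\gamma,\delta,\alpha\beta\in iAi$, $\beta\alpha\in jAj$ and the quiver relations, one sees that each of the nine summands $u_k a v_k$ either vanishes for idempotent or degree reasons or lies in $\soc(A)=k\alpha\beta\oplus k\beta\alpha$. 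Carrying this out gives $t(\alpha)=t(\beta)=t(\gamma)=t(\gamma^2)=t(\delta)=t(\delta^2)=t(\beta\alpha)=0$, while $t(i)=5\alpha\beta+\beta\alpha$ and $t(j)=\alpha\beta+2\beta\alpha$; note $t(1)=6\alpha\beta+3\beta\alpha$ is the Casimir element $\sum_k u_kv_k$, which vanishes as $\chr(k)=3$. Since $\alpha\beta=\delta^2=\gamma^3$ and $\beta\alpha$ are linearly independent in $A$ and $\chr(k)=3$, the span of $t(i)$ and $t(j)$ is the line $k(\alpha\beta-\beta\alpha)$, so $Z^\pr(A)=k(\alpha\beta-\beta\alpha)$ is one-dimensional.

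Next I would identify $\uZ(A)=Z(A)/Z^\pr(A)$. By Lemma \ref{LemmaA}(iii) the set $\{1,\gamma,\gamma^2,\delta,\delta^2,\beta\alpha\}$ is a basis of $Z(A)$; since $\alpha\beta=\delta^2$, killing $\alpha\beta-\beta\alpha$ identifies the class of $\beta\alpha$ with that of $\delta^2$, so $\{1,\gamma,\gamma^2,\delta,\delta^2\}$ maps to a basis of $\uZ(A)$, which gives (i). Now consider the $k$-algebra map $k[x,y]\to\uZ(A)$ with $x\mapsto\gamma$ and $y\mapsto\delta$. The relations $\delta\gamma=0$ and $\gamma^3=\delta^2$ of $A$ put $xy$ and $x^3-y^2$ in the kernel, and $\delta^3=\delta\cdot\delta^2\in\delta\cdot\soc(A)=0$ puts $y^3$ there as well, so the map factors through $R=k[x,y]/(x^3-y^2,xy,y^3)$. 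Since $x^3=y^2$, $x^4=xy^2=0$, $x^2y=0$ and $y^3=0$, the monomials $1,x,x^2,y,y^2$ span $R$, so $\dim_k R\le 5=\dim_k\uZ(A)$; as the map is surjective it is an isomorphism, which is the asserted description of $\uZ(A)$.

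Under this identification $\uZ(A)$ is local, and (ii)--(iv) are immediate from the multiplication of the monomial basis: $J(\uZ(A))=kx\oplus kx^2\oplus ky\oplus ky^2$, $J(\uZ(A))^2=kx^2\oplus ky^2$ (using $x^3=y^2$ and that all other products of two radical generators vanish), $J(\uZ(A))^3=ky^2$ (the only nonzero triple product being $x\cdot x^2=y^2$), and $\soc(\uZ(A))=ky^2$, since $y^2$ is annihilated by both $x$ and $y$ whereas $x\cdot x^2=y^2\ne 0$; in particular $\soc(\uZ(A))=J(\uZ(A))^3$. For (v): $\uZ(A)$ is a finite-dimensional commutative local $k$-algebra with one-dimensional socle by (iv), hence Frobenius, and a commutative Frobenius algebra is symmetric. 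Explicitly, the linear form $s'$ on $\uZ(A)$ with $s'(y^2)=1$ and $s'(1)=s'(x)=s'(x^2)=s'(y)=0$ has, with respect to $(a,b)\mapsto s'(ab)$, the basis $\{y^2,x^2,x,y,1\}$ as dual basis of $\{1,x,x^2,y,y^2\}$, so $s'$ is nondegenerate and hence a symmetrising form.

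The only laborious step is the transfer map computation: it requires careful tracking of the two idempotents and the socle relations across the nine terms $u_kav_k$, and it is exactly where the hypothesis $\chr(k)=3$ enters — in any other characteristic $t(i)$ and $t(j)$ are linearly independent, so that $Z^\pr(A)=\soc(A)$ and $\uZ(A)$ would be only four-dimensional. Everything after that step is routine monomial bookkeeping.
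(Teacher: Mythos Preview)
Your proof is correct and follows essentially the same approach as the paper: compute the relative trace map on the basis elements using the dual bases from Lemma~\ref{symmLemma}, identify the one-dimensional image, and then read off the structure of $\uZ(A)$ from the quiver relations, finishing with the explicit symmetrising form $s'$ (which coincides with the paper's). Your version is in fact more detailed than the paper's in two respects --- you record the integer values $t(i)=5\alpha\beta+\beta\alpha$, $t(j)=\alpha\beta+2\beta\alpha$ before reducing modulo $3$ (making the role of $\chr(k)=3$ explicit), and you spell out the dimension-count argument showing that the surjection from $k[x,y]/(x^3-y^2,xy,y^3)$ is an isomorphism --- but the strategy is identical.
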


\begin{proof} 
It follows from lemma \ref{symmLemma} that the relative trace map 
$\Tr^A_1$ from $A$ to $Z(A)$ is given by
$$\Tr^A_1(u) = iu\alpha\beta + ju\beta\alpha + \alpha u\beta +
\beta u\alpha + \beta\alpha u j + \gamma u \gamma^2 + 
\gamma^2 u\gamma + \delta u\delta + i u \delta^2$$
for all $u\in$ $A$. One checks, using $\chr(k)=3$,  that 
$$\Tr^A_1(i)=-\Tr^A_1(j)= \beta\alpha - \alpha\beta$$
and that $\Tr^A_1$ vanishes on all basis elements different 
from $i$, $j$. Statement (i) then follows from the relations 
in the quiver of $A$ and Lemma \ref{LemmaA}. The algebra $\uZ(A)$ is 
split local, proving statement (ii), whilst a straightforward 
computation shows both statement (iii) and (iv). Finally, a simple 
verification proves  that the map $s :\uZ(A)\to k$ such that 
$$s(y^2)=1$$ 
and such  that 
$$s(1)=s(x)=s(x^2)=s(y)=0$$ 
is a symmetrising form on  $\uZ(A)$. One verifies also that the dual 
basis with respect to the form $s$ of the basis 
$$\{1,x,y,x^2,y^2\}$$ 
is, in this order, the basis 
$$\{y^2,x^2,y,x,1\}.$$ 
This completes the proof.
\end{proof}

\begin{Remark} \label{reptype}
Note that by a result of Erdmann \cite[I.10.8(i)]{Erdmanntame}, $A$ is of wild representation type.
\end{Remark}

\section{The stable centre of  the group algebra $k(P\rtimes C_2)$.}\label{C2}

Let $k$ be a field of characteristic $3$. Set $P=C_3\times C_3$ and $E$ the
subgroup of $\Aut(P)$ of order $2$ such that the nontrivial element $t$ of
$E$ acts as inversion on $P$. Denote by $H=P\rtimes E$ the corresponding
semidirect product; this is a Frobenius group. 
Denote by $r$ and $s$ a generator of the two factors $C_3$ of $P$.
The following Lemma holds in greater generality (see Remark 4.1 in 
\cite{KessarLinckelmann}); we state only what we need in this paper.

\begin{Lemma} \label{projidealLemma2}
The projective ideal $Z^\pr(kH)$ is one-dimensional, with 
$k$-basis $\{\sum_{x\in P}xt\}$, and we have an isomorphism of 
$k$-algebras 
$$\underline{Z}(kH)\cong (kP)^{E}$$ 
induced 
by the map sending $x+x^{-1}$ in $(kP)^E$ to its image in 
$\underline{Z}(kH)$. In particular, we have $\dim_k(\uZ(kH))=5$, 
and the image of the set $\{1,r+r^2,s+s^2,r^2s+rs^2,rs+r^2s^2\}$ is a 
$k$-basis of $\uZ(kH)$. 
\end{Lemma}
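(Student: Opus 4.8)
The plan is to analyze the group algebra $kH$ directly, exploiting the fact that $H = P \rtimes E$ is a Frobenius group with kernel $P$ and complement $E$ of order $2$ coprime to $\chr(k) = 3$. Since $|E|$ is invertible in $k$, the center $Z(kH)$ can be understood via the $E$-action on $kP$: indeed $Z(kH)$ consists of the $H$-conjugation-invariant elements of $kH$, and because $E$ acts coprimely, a Mackey/averaging argument identifies $Z(kH)$ with $(Z(kP))^E \oplus (\text{contributions from the } Pt\text{-part})$. Concretely, $kH = kP \oplus kPt$ as a $kP$-bimodule, and an element $z = a + bt$ with $a, b \in kP$ is central iff $a \in (kP)^E = (kP)^{\langle t\rangle}$ (since $P$ is abelian, $Z(kP) = kP$) and $b$ satisfies the appropriate twisted-centrality condition forcing $bt$ to lie in the span of class sums of the form $\sum_{x \in P} x t$ (up to the $t$-conjugacy relations). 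This should pin down $Z(kH)$ and, crucially, isolate the projective part.

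First I would compute $Z^\pr(kH) = \Tr^{kH}_1(kH)$, the image of the relative trace map from the trivial subgroup. Since $P$ has order $9$ and $\chr(k) = 3$, the relative trace $\Tr^{kH}_P = \sum_{x \in P} x(-)x^{-1}$ composed with $\Tr^P_1$ gives $\Tr^{kH}_1 = \frac{1}{|E|}\sum_{h \in H} h(-)h^{-1}$ restricted appropriately; but more directly, on the $kP$-part, $\Tr^P_1(u) = (\sum_{x\in P} x) u$ which for $u$ a group element is $|P| \cdot (\text{class sum})$-type expressions — and since $|P| = 9 \equiv 0$, these vanish; only the cross terms survive, producing exactly $k \cdot \sum_{x \in P} xt$. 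This establishes that $Z^\pr(kH)$ is one-dimensional with the claimed basis. The hard part will be bookkeeping the $t$-twisted conjugacy classes on $P$ to verify the trace vanishes precisely where claimed and is nonzero on $\sum_{x\in P} xt$; using $\chr(k)=3$ and $t$ acting by inversion, the orbits of $t$ on $P\setminus\{1\}$ pair $x$ with $x^{-1}$, and one checks $\Tr^{kH}_1(t) = \sum_{x\in P} x t x^{-1} = \sum_{x \in P} x^2 t = \sum_{y \in P} y t$ (reindexing, since $x \mapsto x^2$ is a bijection of $P$), which is the stated generator.

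Next I would identify the quotient $\uZ(kH) = Z(kH)/Z^\pr(kH)$ with $(kP)^E$. The map sends a $t$-invariant element of $kP$, which we may write in terms of the symmetrized group elements $x + x^{-1}$ for $x \in P$, to its class in $\uZ(kH)$; one checks this is a well-defined algebra homomorphism (the $t$-part of $Z(kH)$ maps into $Z^\pr(kH)$ by the trace computation above, so modding out kills it) and is bijective by a dimension count: $(kP)^E$ has dimension $\frac{1}{2}(9 + |\{x : x = x^{-1}\}|) = \frac{1}{2}(9 + 1) = 5$, matching $\dim_k Z(kH) - 1$. Finally, to exhibit the explicit basis $\{1, r+r^2, s+s^2, r^2 s + r s^2, rs + r^2 s^2\}$, I would simply enumerate representatives of the five $\langle t\rangle$-orbits on $P$: the fixed point $\{1\}$, and the four orbits $\{r, r^2\}$, $\{s, s^2\}$, $\{rs, r^2 s^2\}$, $\{r^2 s, r s^2\}$, each contributing one symmetrized basis element. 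That these eight nonidentity elements of $P$ fall into exactly these four pairs under inversion is immediate, so the only real content beyond the trace computation is checking that the stated map respects the algebra structure, which follows since $(kP)^E$ is a subalgebra of $kP$ and reduction mod $Z^\pr(kH)$ is a ring homomorphism.
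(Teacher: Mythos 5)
Your proposal follows essentially the same route as the paper: identify $Z^\pr(kH)$ as the image of the relative trace map $\Tr^H_1$, observe that it vanishes on $kP$ because $\Tr^P_1 = |P|\cdot\mathrm{id} = 0$ in characteristic $3$, and see that on the $Pt$-coset it produces exactly $k\cdot\sum_{x\in P}xt$; then conclude by a dimension count using the five $E$-orbits on $P$. One small slip: you write $\Tr^{kH}_1(t) = \sum_{x\in P}xtx^{-1}$, but the trace from the trivial subgroup sums over all of $H$, giving $\sum_{h\in H}hth^{-1} = 2\sum_{x\in P}xtx^{-1}$; since $2\neq 0$ in $k$, this changes nothing (your formula is the class sum, i.e.\ the trace from $C_H(t)=\langle t\rangle$). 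The paper's proof is equivalent but arrives at the dimension of $Z(kH)$ by simply listing the six conjugacy classes of $H$, whereas you derive $\dim_k(kP)^E = 5$ via an orbit count; either is fine.
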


\begin{proof}
The relative trace map $\tn{Tr}_1^{H}$ from $kH$ to $Z(kH)$ satisfies
$\tn{Tr}_1^H=\tn{Tr}_{P}^H\circ\tn{Tr}_1^{P}$. We calculate for all $a \in P$
$$
\tn{Tr}_1^{P}(a)= \sum_{g\in P}gag^{-1}=\sum_{|P|}a=9\cdot a = 0
$$
Thus for every $c \in kP $ we have $\tn{Tr}_1^H(c)=
\tn{Tr}_{P}^H(\tn{Tr}_1^{P}(c))=0$.
On the other hand, for every element of the form $at$ in $H$, where 
$a\in P$, we have

\begin{eqnarray}
\tn{Tr}_1^{H}(at) &=& \sum_{g\in P}g(at)g^{-1} + \sum_{g\in P}(gt)(at)(gt)^{-1} \nonumber\\
&=&  (a + a^{-1})\sum_{x\in P}xt \nonumber\\
&=& 2\cdot\big( \ \sum\limits_{x\in P}xt \ \big)\nonumber
\end{eqnarray}
The conjugacy classes of $G$ are given by $\{1\}$, $\{r,r^2\}$, $\{s,s^2\}$, 
$\{r^2s,rs^2\}$, $\{rs,r^2s^2\}$ and $\{xt \ | \ x\in P\}$. The last statement
follows.
\end{proof}

\begin{Lemma} \label{fixptisomLemma}
There is an isomorphism of $k$-algebras 
$$\uZ(kH)\cong \left(k[x,y]/(x^3,y^3)\right)^{E}$$ 
with inverse induced by the map sending $x$ to $r-1$ and $y$ to 
$s-1$, where the nontrivial element $t$ of $E$ acts by 
$x^t=x^2+2x$ and $y^t=y^2+2y$.
After identifying $x$ and $y$ with their images in $\uZ(kH)$ 
the following statements hold:
\begin{enumerate}
\item[{\rm (i)}] 
The image of the set $\{1, x^2, y^2, xy+x^2y+xy^2, x^2y^2 \}$ is a 
$k$-basis of  $\uZ(kH)$, and in particular we have $\dim_k(\uZ(kH))=5$.

\item[{\rm (ii)}] 
The set $\{x^2, y^2, xy+x^2y+xy^2, x^2y^2\}$ is a $k$-basis of $J(\uZ(kH))$.

\item[{\rm (iii)}] 
The set $\{x^2y^2\}$ is a $k$-basis of $\soc(\uZ(kH))$, and
$J(\uZ(kH))^2=\soc(\uZ(kH))$. In particular, $\dim_k(J(\uZ(kH))^2)=1$.

\item[{\rm (iv)}] 
The $k$-algebra $\uZ(kH)$ is symmetric.
\end{enumerate}
\end{Lemma}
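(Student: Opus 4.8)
The plan is to compute everything explicitly inside the commutative ring $R=k[x,y]/(x^3,y^3)$ and then descend to the fixed subring $R^E$, using the isomorphism $\uZ(kH)\cong R^E$ already established in Lemma \ref{fixptisomLemma}. Since $\chr(k)=3$ and $E$ has order $2$, the fixed subring $R^E$ is easy to get hold of: one simply needs a basis of $R$ consisting of $E$-eigenvectors, and then $R^E$ is spanned by the $+1$-eigenvectors. First I would record the action of $t$ on the monomial basis $\{x^ay^b : 0\le a,b\le 2\}$ via $x^t=x^2+2x$, $y^t=y^2+2y$, and symmetrise: the elements $1$, $x^2$, $y^2$, $x^2y^2$ are (modulo lower-degree corrections) fixed, while for the "mixed" monomials one checks that $xy+x^2y+xy^2$ is fixed. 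Counting, $\dim_k R^E=5$, which is part (i). The multiplication table of $R^E$ is then read off from that of $R$: all products of two elements of $J(R^E)=\langle x^2,y^2,xy+x^2y+xy^2,x^2y^2\rangle$ lie in the one-dimensional space spanned by $x^2y^2$ (indeed $x^2\cdot x^2=0$, $y^2\cdot y^2=0$, $x^2\cdot y^2=x^2y^2$, and $(xy+x^2y+xy^2)^2=x^2y^2$ after killing everything of degree $>4$, and similarly $x^2\cdot(xy+\cdots)=0$ since it already contains $x^3$, etc.), giving (ii) and (iii).

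For part (iv) — symmetry of $\uZ(kH)$ — the approach is to exhibit a symmetrising form directly on $R^E$. Because $\soc(R^E)=\langle x^2y^2\rangle$ is one-dimensional and $R^E$ is a commutative local finite-dimensional $k$-algebra, the standard criterion is: a linear functional $s:R^E\to k$ is a symmetrising form precisely when $s$ does not vanish on $\soc(R^E)$, equivalently when the associated bilinear form $(a,b)\mapsto s(ab)$ is nondegenerate. So I would define $s:R^E\to k$ by $s(x^2y^2)=1$ and $s(1)=s(x^2)=s(y^2)=s(xy+x^2y+xy^2)=0$, and then verify nondegeneracy by computing the Gram matrix of $\{1,x^2,y^2,xy+x^2y+xy^2,x^2y^2\}$ with respect to $(a,b)\mapsto s(ab)$; by the multiplication table in the previous paragraph this matrix is anti-diagonal with nonzero entries ($s(1\cdot x^2y^2)=1$, $s(x^2\cdot y^2)=1$, $s((xy+x^2y+xy^2)^2)=1$, $s(y^2\cdot x^2)=1$, $s(x^2y^2\cdot 1)=1$), hence invertible. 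Equivalently, one can simply display the dual basis, mirroring exactly what was done in Lemma \ref{projcenterLemma}: the dual basis of $\{1,x^2,y^2,xy+x^2y+xy^2,x^2y^2\}$ is $\{x^2y^2,\ y^2,\ x^2,\ xy+x^2y+xy^2,\ 1\}$.

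I do not expect any genuine obstacle here; the only thing to be careful about is the bookkeeping of the nonlinear action $x^t=x^2+2x$ (rather than a linear substitution), which means the "obvious" candidates $x^2,y^2$ are fixed but $x,y$ are not, and mixed terms require the specific combination $xy+x^2y+xy^2$ — and one must double-check, using $\chr(k)=3$, that this element really is $t$-invariant and that no sixth candidate eigenvector with eigenvalue $+1$ has been missed. Once the eigenbasis of $R$ is correctly identified, (i)–(iv) are immediate. As a sanity check worth flagging: this lemma makes $\uZ(kH)$ a $5$-dimensional symmetric local algebra with one-dimensional socle and $J^2=\soc$, which is visibly \emph{not} isomorphic to the $5$-dimensional symmetric local algebra $\uZ(A)\cong k[x,y]/(x^3-y^2,xy,y^3)$ of Lemma \ref{projcenterLemma} (there $J(\uZ(A))^2$ is two-dimensional, by part (iii) of that lemma) — and it is precisely this discrepancy, together with Broué's invariance of stable centres, that will later obstruct $A$ from being (stably equivalent of Morita type to) a block with the relevant structure.
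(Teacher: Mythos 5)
Your proposal is correct and follows essentially the same route as the paper: compute a basis of $\bigl(k[x,y]/(x^3,y^3)\bigr)^E$ directly, read off the radical filtration and socle from the multiplication table, and exhibit the symmetrising form $s(x^2y^2)=1$ via its dual basis (the paper cites \cite[Corollary 1.3]{KessarLinckelmann} for part (iv) but records the very same explicit form). Two trivial slips to tidy up: the starting isomorphism $\uZ(kH)\cong(kP)^E$ comes from Lemma~\ref{projidealLemma2}, not from the lemma you are in the course of proving, and the elements $1, x^2, y^2, x^2y^2$ are $t$-fixed on the nose rather than merely ``modulo corrections''.
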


\begin{proof}
By Lemma \ref{projidealLemma2} we have $\uZ(kH)\cong$ $(kP)^E$. 
Since $k$ has characteristic $3$,  we have an isomorphism 
$kP \cong k[x,y]/(x^3,y^3)$ induced by the map given in the 
statement of the lemma. Under this isomorphism, the action of $t$ on $x$ 
and $y$ is given by $x^t=x^2+2x$ and $y^t=y^2+2y$ as stated.
It is straightforward to then verify that this isomorphism gives 
$$r+r^t\mapsto x^2+2,$$ 
$$s+s^t\mapsto y^2+2,$$ 
$$rs+(rs)^t\mapsto 2+x^2+y^2+2xy+2x^2y+2xy^2+x^2y^2,$$ 
$$r^2s+(r^2s)^t \mapsto 2+x^2+y^2+xy+x^2y+xy^2.$$ 
This proves the statement (i) and (ii). 
A straightforward computation proves statement (iii). The final statement 
is given in general in \cite[Corollary 1.3]{KessarLinckelmann}, with an explicit 
symmetrising form $s :\uZ(kH)\to k$ given by  $s(x^2y^2)=1$ and 
sending all other basis elements to $0$.
\end{proof}

\section{Proof of Theorem \ref{main1}}

Theorem \ref{main1} will be an immediate consequence of Theorem 
\ref{main2} and the following result.

\begin{Theorem}\label{main5}
Let $k$ be an algebraically closed field of prime characteristic $p$, and 
let $A$ be the algebra given in Theorem \ref{main2}. Then $A$ is not 
isomorphic to a basic algebra of a block of a finite group algebra over 
$k$.
\end{Theorem}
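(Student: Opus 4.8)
The plan is to derive a contradiction from the assumption that $A$ is a basic algebra of a block $B$ of some finite group algebra $kG$. By Theorem \ref{main2} we already know that in that case $p=3$ and a defect group $P$ of $B$ is elementary abelian of order $9$, so $P\cong C_3\times C_3$; moreover $A$ has two simple modules and Cartan matrix $C$. The strategy is to exploit the fact that blocks with an abelian defect group are stably equivalent to well-understood local algebras and that the stable centre is an invariant of such equivalences, then to compare the stable centre $\uZ(A)$ computed in Lemma \ref{projcenterLemma} with what it would have to be if $B$ really were a block.

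First I would invoke Puig's theorem \cite{Puabelien}: a block $B$ with abelian defect group $P$ and inertial quotient $E$ (the stabiliser in the fusion system, which here must be a $p'$-subgroup of $\Aut(P)$) is stably equivalent of Morite type to its Brauer correspondent, which in turn is Morita equivalent to the twisted group algebra $k_*(P\rtimes E)$; by a counting/elementary-divisor argument on $C$ the inertial quotient $E$ is forced to have order $2$ acting by inversion on $P$ (this is the only option giving two simple modules and the right Cartan matrix; one can also read it off from the decomposition matrix $D$ in Theorem \ref{main2}, using Kiyota's classification \cite{Kiyota} of blocks with defect group $C_3\times C_3$). The relevant cohomology group vanishes, so the twist is trivial and $B$ is stably equivalent of Morita type to $kH$ with $H=P\rtimes E$ the Frobenius group of Section \ref{C2}. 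Then Brou\'e's result \cite{BroueEq} on the invariance of stable centres under stable equivalences of Morita type gives an isomorphism of $k$-algebras $\uZ(A)\cong\uZ(kH)$, and in particular $\uZ(A)$ would have to be symmetric with the Loewy structure of $\uZ(kH)$.

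The contradiction then comes from comparing the two explicit computations: Lemma \ref{projcenterLemma} gives $\uZ(A)\cong k[x,y]/(x^3-y^2,xy,y^3)$ with $\dim_k J(\uZ(A))^2=2$, while Lemma \ref{fixptisomLemma} gives $\uZ(kH)\cong(k[x,y]/(x^3,y^3))^E$ with $\dim_k J(\uZ(kH))^2=1$. Since $\dim_k J(\uZ(A))^2\neq\dim_k J(\uZ(kH))^2$, these algebras are not isomorphic, contradicting $\uZ(A)\cong\uZ(kH)$. (Alternatively, one could phrase the final step through the properties of blocks with symmetric stable centre from \cite{KessarLinckelmann}, but the dimension of the square of the radical of the stable centre is already a clean separating invariant.) Hence $A$ is not a basic algebra of a block, as claimed.

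The main obstacle is the middle step: justifying that a hypothetical block $B$ with this Cartan matrix and defect group $C_3\times C_3$ really does fall under Puig's theorem with the inertial quotient pinned down to $C_2$ acting by inversion. This requires knowing that $B$ is nilpotent-by-something controllable — concretely, that the fusion system on $P$ is determined and its automiser is forced — which is where Kiyota's analysis \cite{Kiyota} of the possible fusion systems and decomposition matrices for blocks with defect group $C_3\times C_3$ is essential; one must check that the only fusion pattern compatible with the decomposition matrix $D$ of Theorem \ref{main2} is the one giving $H=P\rtimes C_2$. Once that is secured, Puig's stable equivalence, Brou\'e's invariance of the stable centre, and the two explicit module-theoretic computations combine immediately to finish the argument.
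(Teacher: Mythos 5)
Your overall strategy is the same as the paper's: reduce to $p=3$, $P\cong C_3\times C_3$, pin down the inertial quotient $E$, invoke Puig's stable equivalence between $B$ and $kH=k(P\rtimes E)$ together with Brou\'e's invariance of stable centres, and then get a contradiction from $\dim_k J(\uZ(A))^2=2\neq 1=\dim_k J(\uZ(kH))^2$. That final separating invariant is exactly the one the paper uses. Where the proposal is genuinely incomplete is the step you yourself flag as ``the main obstacle'': identifying $E$ as $C_2$ acting freely on $P\setminus\{1\}$. The remarks about an ``elementary-divisor argument on $C$'' (the elementary divisors $9,1$ only give $|P|=9$ and $l(B)=2$, nothing directly about $E$) and about ``reading it off from the decomposition matrix $D$ using Kiyota'' are not an argument; they defer the check rather than perform it, and the aside that ``the relevant cohomology group vanishes'' is only justified once $E\cong C_2$ has been established, so as written it is circular.

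The paper closes this gap by a different and cleaner route, which you miss: Lemma \ref{projcenterLemma}(v) shows directly that $\uZ(A)$ is a \emph{symmetric} algebra, and Proposition 3.8 of \cite{KessarLinckelmann} then yields an algebra isomorphism $\uZ(A)\cong (kP)^E$, where $E$ is the inertial quotient. Since $\dim_k\uZ(A)=5$, the group $E$ has exactly five orbits on $P$, and checking Kiyota's list of possible $3'$-subgroups of $\Aut(C_3\times C_3)$ (namely $1$, $C_2$ in two actions, $C_2\times C_2$, $C_4$, $C_8$, $D_8$, $Q_8$, $SD_{16}$) shows the only candidate with five orbits is $C_2$ acting by inversion. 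This avoids any comparison of decomposition matrices and makes no appeal to the fusion system beyond the inertial quotient. If you want to salvage your alternative route, you would need to actually carry out the matching against Kiyota's tables, including handling possible twists for the candidate inertial quotients with nontrivial Schur multiplier ($D_8$, $Q_8$); the symmetric-stable-centre argument sidesteps all of that.
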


\begin{proof}
Arguing by contradiction, suppose that $A$ is isomorphic to a basic 
algebra of a block $B$ of $kG$, for some finite group $G$. Denote 
by $P$ a defect group of $B$. By Theorem \ref{main2} we have $p=3$ 
and $P\cong$ $C_3\times C_3$. By Lemma \ref{projcenterLemma}, the
stable centre $\uZ(A)$ is symmetric, hence so is $\uZ(B)$, as $A$ and
$B$ are Morita equivalent.  It follows from 
\cite[Proposition 3.8]{KessarLinckelmann} that we have an algebra 
isomorphism
$$\uZ(A)\cong (kP)^E$$
where $E$ is the inertial quotient of the block $B$. Again by Lemma
\ref{projcenterLemma}, we have $\dim_k((kP)^E)=5$, or equivalently,
$E$ has five orbits in $P$. The list of possible inertial quotients in 
Kiyota's paper \cite{Kiyota} shows that $E$ is isomorphic to one of
$1$, $C_2$, $C_2\times C_2$, $C_4$, $C_8$, $D_8$, $Q_8$, $SD_{16}$.
In all cases except for $E\cong$ $C_2$ is the action of $E$ on $P$ 
determined, up to equivalence,  by the isomorphism class of $E$.
Thus  if $E$ contains a cyclic subgroup of order $4$, then $E$ has 
at most $3$ orbits, and if $E$ is the Klein four group, then $E$ has 
$4$ orbits. Therefore we have $E\cong$ $C_2$. If the nontrivial element 
$t$ of $E$ has a nontrivial fixed point in $P$ (or equivalently, if $t$ 
centralises one of the factors $C_3$ of $P$ and acts as inversion on 
the other),  then $E$ has $6$ orbits. 
Thus $t$ has no nontrivial fixed point in $P$, and the group $H=$
$P\rtimes E$ is  the Frobenius group considered in the previous section.
By a result of Puig \cite[6.8]{Puabelien} (also described in
\cite[Theorem 10.5.1]{LinBlockII}), there is a stable equivalence of 
Morita type between $B$ and $kH$, hence between $A$ and $kH$. 
By a result of Brou\'e \cite[5.4]{BroueEq} (see also
\cite[Corollary 2.17.14]{LinBlockI}), there is an algebra isomorphism
$\uZ(A)\cong$ $\uZ(kH)$. This, however, contradicts the calculations
in the Lemmas \ref{projcenterLemma} and \ref{fixptisomLemma}, which 
show that the dimension of  $J(\uZ(A))^2$ and of $J(\uZ(kH))^2$ are 
different. This contradiction completes the proof.
\end{proof}

\begin{proof}[{Proof of Theorem \ref{main1}}]
Arguing by contradiction, if a defect $P$ of $B$ is not cyclic, then
$P\cong$ $C_3\times C_3$ because the Cartan matrix of $B$
has elementary divisors $9$ and $1$. But then $B$ has a basic
algebra isomorphic to the algebra $A$ in Theorem \ref{main2}.
This, however, is ruled out by Theorem \ref{main5}.
\end{proof}

\section{Further remarks}

Using the arguments of the proof of Theorem \ref{main5} it is possible 
to prove some slightly stronger statements about the
stable equivalence class of the algebra $A$ from Theorem \ref{main2}.

\begin{Proposition} \label{Further1}
Let $k$ be an algebraically closed field of prime characteristic $p$ and let
$A$ be the algebra in Theorem \ref{main2}. Let $P$ be a finite $p$-group,
$E$ a $p'$-subgroup of $\Aut(P)$, and $\tau\in$ $H^2(E;k^\times)$. 
There does not exist a stable equivalence of Morita type between $A$ and
the twisted group algebra $k_\tau(P\rtimes E)$.
\end{Proposition}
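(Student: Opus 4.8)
The plan is to reduce to the case already handled in the proof of Theorem \ref{main5} by showing that no twisted group algebra $k_\tau(P\rtimes E)$ can, for dimension and structural reasons, be stably equivalent to $A$ unless it falls into the Frobenius-group situation, which was already eliminated. First I would invoke the fact that stable equivalences of Morita type preserve the stable centre (Brou\'e \cite[5.4]{BroueEq}), so that a stable equivalence between $A$ and $k_\tau(P\rtimes E)$ would force $\uZ(A)\cong\uZ(k_\tau(P\rtimes E))$ as $k$-algebras. By Lemma \ref{projcenterLemma}, $\uZ(A)$ is a $5$-dimensional symmetric local algebra with $\dim_k J(\uZ(A))^2 = 2$ (spanned by $x^2,y^2$) and $\dim_k\soc(\uZ(A))=1$. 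So the task is to show that $\uZ(k_\tau(P\rtimes E))$ never has this precise invariant combination.

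Next I would bring in the structure of $\uZ$ for a twisted group algebra of $P\rtimes E$. Since $E$ is a $p'$-group, $k_\tau(P\rtimes E)$ is a crossed product of $kP$ with $E$, and its stable centre should be computable much as in Lemma \ref{projidealLemma2} and \ref{fixptisomLemma}: one expects $\uZ(k_\tau(P\rtimes E))\cong (Z(k_\tau E \otimes \cdots))$-type object, but more usefully, by \cite[Proposition 3.8]{KessarLinckelmann} (applied as in the proof of Theorem \ref{main5}), if $\uZ$ is symmetric then $\uZ(k_\tau(P\rtimes E))\cong (kP)^E$ for a suitable action — the twist contributes nothing to the stable centre once the relevant block-theoretic hypotheses hold. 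In particular $\dim_k\uZ = 5$ forces $E$ to have exactly $5$ orbits on $P$. The hard part will be to run through the list of $p$-groups $P$ and $p'$-automorphism groups $E$ with exactly $5$ orbits, and in each case to read off $\dim_k J((kP)^E)^2$. This is where the argument really happens: one must show that every such $(P,E)$ other than $(C_3\times C_3, C_2$ acting by inversion$)$ — which is exactly the Frobenius case of Section \ref{C2}, already ruled out — gives $\dim_k J((kP)^E)^2 \neq 2$ or fails to be symmetric, hence cannot match $\uZ(A)$.

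To organise this, I would first note that the number of orbits of $E$ on $P$ equals $5$, which is small, so $|P|$ is bounded once $|E|$ is controlled; $p$ must be odd (else $\uZ(kP)$ itself, when $E=1$, has the wrong structure, and more generally small orbit counts force small $p$-groups), and in fact a short counting argument pins down $p=3$ and $|P|\le 9$, whence $P\cong C_3$ or $P\cong C_3\times C_3$. For $P\cong C_3$ the algebra $(kP)^E$ has dimension at most $3$, too small. So $P\cong C_3\times C_3$, and then the possible $E$ are exactly those appearing via Kiyota's list together with the orbit count, exactly as in the proof of Theorem \ref{main5}; the analysis there already shows $E\cong C_2$ acting fixed-point-freely is the only option with $5$ orbits, and that is the Frobenius group $H$ whose stable centre, by Lemma \ref{fixptisomLemma}, satisfies $\dim_k J(\uZ(kH))^2 = 1 \neq 2 = \dim_k J(\uZ(A))^2$. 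This contradiction, exactly parallel to the final step of the proof of Theorem \ref{main5}, completes the argument. The main obstacle is thus not any single computation but the need to justify carefully that the twist $\tau$ does not change the isomorphism type of the stable centre, and to confirm that the orbit-counting reduction genuinely bounds $(P,E)$ down to the single already-excluded case.
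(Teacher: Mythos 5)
Your overall strategy is the right one: Brou\'e's invariance of the stable centre, Kessar--Linckelmann \cite[Proposition 3.8]{KessarLinckelmann}, orbit counting, Kiyota's list, and the final contradiction $\dim_k J(\uZ(A))^2 = 2 \neq 1 = \dim_k J((kP)^E)^2$ all appear in the paper's proof. But there is a genuine gap at the crucial reduction step, where you write that ``a short counting argument pins down $p=3$ and $|P|\le 9$.'' No such counting argument is supplied, and it is not clear that one exists: the mere fact that $E$ has exactly five orbits on $P$ does not by itself bound $p$ or $|P|$, since the orbit count depends on $|E|$, which is unconstrained at that point. The paper instead bounds $P$ by an entirely different mechanism that your proposal omits: the Cartan matrix determinant is an invariant of stable equivalence of Morita type (\cite[Proposition 4.14.13]{LinBlockI}), so $\det(C_{k_\tau(P\rtimes E)}) = \det(C_A) = 9$; and since $k_\tau(P\rtimes E)$ is a block of a central $p'$-extension of $P\rtimes E$ with defect group $P$, this determinant is a power of $p$ divisible by $|P|$. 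That forces $p=3$ and $|P|\mid 9$ cleanly.

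A second, related omission: having $|P|\le 9$ still leaves the case $P\cong C_9$, which your dimension count of $(kP)^E$ does not exclude — for $E=C_2$ acting by inversion on $C_9$ the fixed-point algebra $(kC_9)^{C_2}$ is also $5$-dimensional. The paper disposes of cyclic $P$ by noting that $A$ is of wild representation type (Remark \ref{reptype}) and representation type is preserved under stable equivalence of Morita type, whereas a block with cyclic defect group is of finite representation type. Without both the Cartan-determinant invariance and the representation-type argument, the reduction to $P\cong C_3\times C_3$ is unjustified, and the rest of your argument (which is otherwise parallel to the paper's proof of Theorem~\ref{main5}) does not get off the ground.
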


\begin{proof}
Arguing by contradiction, suppose that there is a stable equivalence of 
Morita type between $A$ and $k_\tau(P\rtimes E)$. Note that 
$k_\tau(P\rtimes E)$ is a block of a central $p'$-extension of $P\rtimes E$
with defect group $P$, so its Cartan matrix has a determinant divisible
by $|P|$. By \cite[Proposition 4.14.13]{LinBlockI}, the Cartan matrices of
the algebras $A$ and $k_\tau(P\rtimes E)$ have the same determinant,
which is $9$. Since $A$ is clearly not of finite representation type
(cf. Remark \ref{reptype}), it follows that $P$ is not cyclic, hence 
$P\cong$ $C_3\times C_3$. 
Using as before Brou\'e's result  \cite[5.4]{BroueEq}, we have an
isomorphism $\uZ(A)\cong$ $\uZ(k_\tau(P\rtimes E))$. Since
$\uZ(A)$ is symmetric, so is $\uZ(k_\tau(P\rtimes E))$. Since
$k_\tau(P\rtimes E)$ is a block of a central $p'$-extension of 
$P\rtimes E$ with defect group $P$ and inertial quotient $E$, it 
follows again from \cite[Proposition 3.8]{KessarLinckelmann} that
$\uZ(A)\cong$ $(kP)^E$. From this point onward, the rest of the 
proof follows the proof of Theorem \ref{main5},
whence the result.
\end{proof}

\begin{Remark} \label{Further2}
By results of  Rouquier \cite[6.3]{Rouqstable} (see also 
\cite[Theorem A2 ]{Liperm}), for any block $B$ with an elementary 
abelian defect group of rank $2$ there is a stable equivalence of Morita
type between $B$ and its Brauer correspondent, which by a result
of K\"ulshammer \cite{Kuenormal}, is Morita equivalent to a 
twisted semidirect product group algebra as in Proposition 
\ref{Further1}. Thus Theorem \ref{main5} can be obtained as a
consequence of Proposition \ref{Further1} and Rouquier's stable 
equivalence.
\end{Remark}

\begin{Remark} \label{Further3}
A slightly different proof of Theorem \ref{main5} makes use of 
Brou\'e's surjective algebra  homomorphism $Z(B)\to$ $(kZ(P))^E$ 
from  \cite[Proposition III (1.1)]{BroueCoeff}, induced by the Brauer
homomorphism $\Br_P$, where here $P$ is a (not 
necessarily abelian) defect group of a block $B$ of a finite group 
algebra $kG$, with $k$ an algebraically closed field of prime 
characteristic $p$. If $P$ is normal in $G$, then it is easy to see
that Brou\'e's homomorphism is split surjective, but this is not
known in general. If $B$ is a block with  $P$ nontrivial such
that there exists a stable equivalence of Morita type between $B$ 
and its Brauer correspondent, then this implies the existence 
of at least {\it some} split surjective algebra homomorphism 
$\uZ(B)\to$ $kZ(P)^E$. 

Kiyota's list in \cite{Kiyota} shows that if
$A$ were isomorphic to a basic algebra of a block with defect
group $P\cong$ $C_3\times C_3$, then  $E$ would be isomorphic to 
one of $C_2$ or $D_8$ (subcase (b) in Kiyota's list). The case $C_2$ 
can be ruled out as above, and the case $D_8$ 
can be ruled out by using Rouquier's stable equivalence, and by 
showing that if $E\cong$ $D_8$, then $(kP)^E$ is uniserial of 
dimension $3$, but  $\uZ(A)$ admits no split surjective algebra 
homomorphism onto a uniserial algebra of dimension $3$. Note that 
$\uZ(A)$ does though admit a surjective algebra homomorphism onto a 
uniserial algebra of dimension $3$, so the splitting is an essential 
point in this argument, and may warrant further investigation. 
\end{Remark}


\end{document}